\newtheorem{theorem}{Theorem}[section]
\newtheorem{lemma}[theorem]{Lemma}
\newtheorem{corollary}[theorem]{Corollary}
\theoremstyle{definition}
\newtheorem{definition}[theorem]{Definition}
\newtheorem{remark}[theorem]{Remark}
\numberwithin{equation}{section}
\def\DD{D\kern-.7em\raise0.4ex\hbox{\char '55}\kern.33em}
\title[Sine laws on semigroups with an involutive anti-automorphism]{Sine laws on semigroups with an involutive anti-automorphism: A Levi--Civita approach via left translations}
\author{\DD\d{\u a}ng V\~o Ph\'uc}
\address{Department of Mathematics, FPT University, Quy Nhon AI Campus, An Phu Thinh New Urban Area, Vietnam}
\email{dangphuc150488@gmail.com}
\thanks{ORCID: \url{https://orcid.org/0000-0002-6885-3996}}
\keywords{Functional equation, Sine addition law, Semigroup, Anti-automorphism, Levi--Civita equation, Translation invariance.}
\subjclass[2020]{39B52; 20M15.}
\begin{document}

\begin{abstract}
Stetk\ae r's matrix (Levi--Civita) method is a powerful tool for functional equations on semigroups involving a homomorphism $\sigma$, as it yields a finite-dimensional invariant space under right translations and a corresponding matrix formalism. However, this framework collapses when $\sigma$ is an involutive anti-automorphism due to the order reversal in the right-regular action. In this paper, we overcome this obstruction at the operator level by establishing the conjugation identity: letting $J$ denote composition with $\sigma$, we prove
\[
J\,R(\sigma(y))\,J=L(y)\qquad(\forall\,y\in S),
\]
which converts the problematic right translates into left translations. Using this left-translation approach, we obtain an anti-automorphic Levi--Civita closure principle and apply it to the generalized sine law. Remarkably, the classical dichotomy $\beta\in\{\pm1\}$ and the parity relation $f\circ\sigma=\beta f$ are recovered unconditionally. Furthermore, under a natural bridge hypothesis, which is automatically satisfied when there exists a central element $c$ with $f(c)\neq 0$, we obtain the corresponding standard $xy$-addition law and the exact $\sigma$-transformation rule for $g$.
\end{abstract}

\maketitle

\section{Introduction}

The classical trigonometric addition laws (most notably d'Alembert's equation and the sine addition law)
form a cornerstone of the theory of functional equations. Since the pioneering monographs
(e.g.\ \cite{AczelDhombresBook}), a major theme has been to understand how the algebraic structure of the
underlying domain controls the analytic structure of solutions. Over the past decades, this program has been
pushed far beyond abelian groups to non-abelian groups \cite{StetkaerBook} and, more recently, to semigroups,
where the absence of inverses and commutativity forces genuinely new techniques.

A modern semigroup-oriented line of research, developed in particular by Ebanks
\cite{EbanksAroundSine,EbanksSineSubtraction} and Stetk\ae r \cite{StetkaerSineLaw,StetkaerCentrality},
treats generalized trigonometric-type equations on a semigroup $S$ involving a map $\sigma:S\to S$
(typically a homomorphism, an involutive automorphism, or closely related variants). A central technical tool
in this literature is the Levi--Civita method: one shows that a finite-dimensional space generated by
solutions is invariant under a suitable translation action, and then exploits that invariance to derive
strong structural constraints.

To motivate the operator viewpoint adopted here, consider the two-parameter family of generalized sine
addition/subtraction laws
\begin{equation}\label{eq:intro-family}
f(x\sigma(y))=f(x)g(y)+\beta\,g(x)f(y)+\gamma\,f(x)f(y),
\qquad \beta\in F^\ast,\ \gamma\in F,
\end{equation}
studied systematically in \cite{StetkaerSineLaw}. When $\sigma$ is a homomorphism, Stetk\ae r's approach
(implemented via right translations) yields a sharp dichotomy and shows that linearly independent solutions
satisfy a simpler $xy$-addition law together with precise transformation rules such as $f\circ\sigma=\beta f$.

When $\sigma$ is an \emph{anti-homomorphism}, i.e.\ $\sigma(xy)=\sigma(y)\sigma(x)$, the parametrized family
$y\mapsto R(\sigma(y))$ satisfies the order-reversing composition law
\begin{equation}\label{eq:order-reversal}
R(\sigma(y_1))\,R(\sigma(y_2))
=R(\sigma(y_1)\sigma(y_2))
=R(\sigma(y_2y_1)).
\end{equation}
Thus the family $R(\sigma(\cdot))$ is no longer a right-regular representation in the parameter $y$. Of
course, because $\sigma^2=\mathrm{id}$, one may rewrite \eqref{eq:intro-family} by replacing $y$ with
$\sigma(y)$ and thereby recover a standard $xy$-equation. Our aim here is complementary: we isolate an
intrinsic operator identity explaining how the anti-automorphic family $R(\sigma(\cdot))$ is converted into a
genuine translation action.

\smallskip
\noindent\textbf{Operator-level viewpoint.}
When $\sigma$ is an involutive anti-automorphism, the family $\{R(\sigma(y))\}_{y\in S}$ is order-reversing in
the parameter $y$ by \eqref{eq:order-reversal}. Letting $J$ denote composition with $\sigma$,
$(Jh)(x)=h(\sigma(x))$, we prove the conjugation identity
\begin{equation}\label{eq:intro-conj}
J\,R(\sigma(y))\,J=L(y)\qquad(y\in S),
\end{equation}
which converts the family $R(\sigma(\cdot))$ into left translations. This gives a natural operator-level
reformulation of the anti-automorphic situation and is the viewpoint used throughout the paper.

\smallskip
\noindent\textbf{Main idea: conjugation to left translations.}
Throughout, $\sigma:S\to S$ is assumed to be an involutive anti-automorphism.
The conjugation identity \eqref{eq:intro-conj} (Lemma~\ref{lem:conj}) allows us to transfer the right-translate action
$R(\sigma(\cdot))$ to left translations, which restores the translation-invariance mechanism needed for a
Levi--Civita analysis. In the application to the generalized sine law, the decisive additional input is a
bridge condition of the form
\[
\exists\,d\in S:\qquad f(d)\neq0\quad\text{and}\quad L(d)f\in\mathrm{span}\{f,g\}.
\]
A central element $c\in Z(S)$ with $f(c)\neq0$ is a convenient sufficient condition for this bridge
assumption, but the bridge formulation is the intrinsic one used in the proof.

\smallskip
\noindent\textbf{Main results.}
Our first main result, Theorem~\ref{thm:AH-LC}, is an anti-automorphic Levi--Civita closure theorem for equations of the form
\[
f(x\sigma(y))=f(x)h_1(y)+g(x)h_2(y),
\]
where $\{f,g\}$ is linearly independent and $h_2\not\equiv0$. The theorem shows that $J(V)$ is invariant under left
translations (equivalently, $V$ is invariant under $R(\sigma(\cdot))$), yielding a closure principle tailored
to involutive anti-automorphisms.

Our second main result applies this closure to the generalized sine law \eqref{eq:intro-family}. In
Theorem~\ref{thm:AH44}, under the bridge assumption above, we obtain an analogue of Stetk\ae r's structural
theorem: for linearly independent solutions $(f,g)$, one necessarily has $\beta\in\{\pm1\}$, the equation yields
the standard $xy$-addition law, and the expected $\sigma$-transformation rules
\[
f\circ\sigma=\beta f,\qquad g\circ\sigma=
\begin{cases}
g+af,& \beta=-1,\\
g,& \beta=1.
\end{cases}
\]
Moreover, the proof shows that the parity relation $f\circ\sigma=\beta f$ is obtained before the bridge
assumption is invoked, whereas the bridge assumption is used only to obtain control of $g\circ\sigma$ and thereby to
reach the standard $xy$-addition law. Corollary~\ref{cor:AH44-central} records the frequently useful
central-element criterion, and Example~\ref{ex:s3-bridge} shows that the bridge assumption is strictly weaker.

\smallskip
\noindent\textbf{Organization of the paper.}
Section~\ref{sec:prelim} collects the needed definitions and notation (central/abelian functions, parity, and
the regular actions $R$ and $L$). In Section~\ref{sec:AHLC} we develop the conjugation-based framework: we prove
\eqref{eq:intro-conj} and establish the anti-automorphic Levi--Civita closure theorem (Theorem~\ref{thm:AH-LC}).
Section~\ref{sec:AH44} applies this closure to the generalized sine law and proves our main structural
classification under the bridge assumption; Corollary~\ref{cor:AH44-central} gives the corresponding result for central elements. 
Section~\ref{sec:examples} provides concrete illustrations from matrix groups and the symmetric
group, including a non-central example showing that the bridge assumption can hold even when no central element
$c$ satisfies $f(c)\neq0$. Finally, Section~\ref{sec:conclusions} gives concluding remarks and connections to
recent centrality/abelianity results in the literature.

\section{Preliminaries}\label{sec:prelim}
Throughout, $S$ is a semigroup with multiplicative notation; $F$ is a field with $\mathrm{char}(F)\neq2$; $F^\ast=F\setminus\{0\}$. We write $F(S,F)$ for the space of $F$-valued functions on~$S$.

\begin{definition}[Central, abelian \cite{StetkaerCentrality}]\label{def:central-abelian}
A function $F:S\to A$ is \emph{central} if $F(xy)=F(yx)$ for all $x,y\in S$. A function $F:S\to F$ is \emph{abelian} if
\[
F(x_1x_2\cdots x_n)=F(x_{\pi(1)}x_{\pi(2)}\cdots x_{\pi(n)})
\]
for all $n\ge2$ and all permutations $\pi$. A central function is abelian if and only if it satisfies Kannappan's condition $F(xyz)=F(xzy)$ for all $x,y,z\in S$.
\end{definition}

\begin{definition}[Parity \cite{StetkaerSineLaw, StetkaerCentrality}]\label{def:parity}
Given a map $\varphi:S\to S$, a function $F:S\to F$ is called \emph{even with respect to} $\varphi$ if $F\circ\varphi=F$, and \emph{odd with respect to} $\varphi$ if $F\circ\varphi=-F$.
\end{definition}

\begin{definition}[Homomorphisms, anti-homomorphisms]\label{def:hom-anti}
A map $\varphi:S\to S$ is a \emph{homomorphism} if $\varphi(xy)=\varphi(x)\varphi(y)$, and an \emph{anti-homomorphism} if $\varphi(xy)=\varphi(y)\varphi(x)$. An \emph{involutive anti-automorphism} is a bijective anti-homomorphism $\sigma$ with $\sigma^2=\mathrm{id}$.
\end{definition}

\begin{definition}[Right/left regular representations]\label{def:R-L}
The operators $R,L: S\to \mathrm{End}\big(F(S,F)\big)$ are defined by
\[
(R(y)h)(x):=h(xy),\qquad (L(y)h)(x):=h(yx).
\]
Then $R$ is a (semi)group homomorphism and $L$ is an anti-homomorphism:
\[
R(y_1)R(y_2)=R(y_1y_2),\qquad L(y_2)L(y_1)=L(y_1y_2).
\]
\end{definition}

\section{A Left-Translation Method via Operator Conjugation}\label{sec:AHLC}
Throughout this section, $\sigma:S\to S$ is an involutive anti-automorphism.

\subsection{Conjugating Right Translates into Left Translates}
Define an involutive linear operator $J:F(S,F)\to F(S,F)$ by
\[
(Jh)(x):=h(\sigma(x)).
\]
Since $\sigma$ is involutive, $J^2=\mathrm{id}$ and $J^{-1}=J$.

\begin{lemma}[Conjugation Identity]\label{lem:conj}
For all $y\in S$, the operator identity holds:
\[
J\,R(\sigma(y))\,J = L(y).
\]
\end{lemma}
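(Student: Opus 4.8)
The plan is to verify the operator identity $J R(\sigma(y)) J = L(y)$ by evaluating both sides on an arbitrary function $h \in F(S,F)$ at an arbitrary point $x \in S$, and checking that the resulting scalars agree. Since $J$, $R(\sigma(y))$, and $L(y)$ are all defined pointwise, this is a direct computation rather than a structural argument: the whole content is a single chain of substitutions.

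Concretely, I would fix $h$ and $x$, set $g := R(\sigma(y))J h$ so that the left-hand side is $(Jg)(x) = g(\sigma(x))$, and then unwind the definitions one layer at a time. Applying the definition of $R$ gives $g(\sigma(x)) = (Jh)(\sigma(x)\,\sigma(y))$; applying the definition of $J$ gives $(Jh)(\sigma(x)\sigma(y)) = h\big(\sigma(\sigma(x)\sigma(y))\big)$; and now the anti-homomorphism property of $\sigma$ together with $\sigma^2 = \mathrm{id}$ collapses the argument: $\sigma(\sigma(x)\sigma(y)) = \sigma(\sigma(y))\,\sigma(\sigma(x)) = y x$. Hence the left-hand side evaluated at $x$ equals $h(yx) = (L(y)h)(x)$, which is exactly the right-hand side. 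Since $h$ and $x$ were arbitrary, the operator identity follows.

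The only place where anything nontrivial happens is the step $\sigma(\sigma(x)\sigma(y)) = yx$, and this is precisely where both hypotheses on $\sigma$ are consumed — the anti-homomorphism property is what reverses the product (turning $\sigma(x)\sigma(y)$ into $\sigma(\sigma(y))\sigma(\sigma(x))$ rather than $\sigma(\sigma(x))\sigma(\sigma(y))$), and $\sigma^2 = \mathrm{id}$ is what removes the outer pair of $\sigma$'s. If $\sigma$ were a genuine homomorphism instead, the same computation would yield $\sigma(\sigma(x)\sigma(y)) = \sigma(\sigma(x))\sigma(\sigma(y)) = xy$, giving $J R(\sigma(y)) J = R(y)$, which is the familiar (and, for our purposes, useless) statement; so the order-reversal is doing exactly the work advertised in the introduction. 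I do not anticipate any real obstacle here: surjectivity of $\sigma$ is not even needed for this particular lemma (it will matter later, when we want $R(\sigma(y))$ to range over all of $R(S)$ as $y$ ranges over $S$), and no hypothesis on $S$ beyond being a semigroup is required. The one point worth stating carefully is that $J$ is well-defined and involutive, but that has already been recorded immediately before the lemma.
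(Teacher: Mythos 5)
Your proposal is correct and follows essentially the same argument as the paper: a direct pointwise evaluation on an arbitrary $h$ and $x$, unwinding the definitions of $J$ and $R$, and using the anti-homomorphism property together with $\sigma^2=\mathrm{id}$ to collapse $\sigma\big(\sigma(x)\sigma(y)\big)$ to $yx$. The minor variation — you apply the anti-homomorphism to the outer $\sigma$ while the paper first rewrites $\sigma(x)\sigma(y)=\sigma(yx)$ — is immaterial, and your observations about which hypotheses are actually consumed are accurate.
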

\begin{proof}
Fix $h\in F(S,F)$ and $x\in S$. Using the definitions of $J$ and $R$, and the fact that
$\sigma$ is an anti-homomorphism and involutive, we compute
\begin{align*}
(JR(\sigma(y))Jh)(x)
&=(R(\sigma(y))Jh)(\sigma(x))\\
&=(Jh)(\sigma(x)\sigma(y))\\
&=h\!\left(\sigma(\sigma(x)\sigma(y))\right)\\
&=h\!\left(\sigma(\sigma(yx))\right) \qquad(\text{since }\sigma(x)\sigma(y)=\sigma(yx))\\
&=h(yx)\qquad(\text{since }\sigma^2=\mathrm{id})\\
&=(L(y)h)(x).
\end{align*}
\end{proof}

\subsection{Anti-automorphism Levi--Civita closure}\label{subsec:AHLC}

\begin{theorem}\label{thm:AH-LC}
Let $\sigma:S\to S$ be an involutive anti-automorphism.
Suppose $f,g,h_1,h_2:S\to F$ satisfy
\begin{equation}\label{eq:LC-sigma}
f(x\sigma(y))=f(x)h_1(y)+g(x)h_2(y),\quad x,y\in S,
\end{equation}
where $\{f,g\}$ is linearly independent and $h_2\not\equiv 0$.
Set $V:=\mathrm{span}\{f,g\}$ and $V^\sigma:=J(V)=\mathrm{span}\{Jf,Jg\}$. Then $V^\sigma$ is invariant under $L(y)$ for every $y\in S$.
Equivalently, $V$ is invariant under $R(\sigma(y))$ for every $y\in S$.
\end{theorem}

\begin{proof}

Write \eqref{eq:LC-sigma} in operator form:
\[
R(\sigma(y))f = h_1(y)f + h_2(y)g.
\]
Conjugate by $J$ and use Lemma~\ref{lem:conj}:
\[
L(y)(Jf)=J R(\sigma(y))J(Jf)=J(R(\sigma(y))f)=h_1(y)Jf+h_2(y)Jg.
\]
Thus $L(y)(Jf)\in V^\sigma$.

\medskip
We now show that $L(y)(Jg)\in V^\sigma$ for all $y\in S$. Indeed, choose $y_0\in S$ such that $h_2(y_0)\neq 0$ (this exists because $h_2\not\equiv 0$).
From
\[
L(y_0)(Jf)=h_1(y_0)Jf+h_2(y_0)Jg
\]
we obtain
\[
Jg=\frac{1}{h_2(y_0)}\,L(y_0)(Jf)-\frac{h_1(y_0)}{h_2(y_0)}\,Jf.
\]
Now fix $y\in S$ and apply $L(y)$:
\[
L(y)(Jg)=\frac{1}{h_2(y_0)}\,L(y)L(y_0)(Jf)-\frac{h_1(y_0)}{h_2(y_0)}\,L(y)(Jf).
\]
Since $L$ is an anti-homomorphism (Definition~\ref{def:R-L}), $L(y)L(y_0)=L(y_0y)$. Hence
\[
L(y)(Jg)=\frac{1}{h_2(y_0)}\,L(y_0y)(Jf)-\frac{h_1(y_0)}{h_2(y_0)}\,L(y)(Jf).
\]
We already know $L(z)(Jf)\in V^\sigma$ for all $z\in S$, so the right-hand side lies in $V^\sigma$.
This proves the claim, hence $V^\sigma$ is $L$-invariant.
\end{proof}

\begin{remark}[Sharpness of the hypothesis $h_2\not\equiv 0$]\label{rmk:h2-sharp}
Theorem~\ref{thm:AH-LC} does not require linear independence of $h_1$ and $h_2$; in particular, it applies when
$h_1=0$ and $h_2\not\equiv 0$. The proof uses only the existence of a point $y_0\in S$ with $h_2(y_0)\neq 0$.

This hypothesis is sharp. If $h_2\equiv 0$, the conclusion can fail. For example, let $S=C_3=\{e,a,a^2\}$,
let $\sigma=\mathrm{id}_S$, let $f\equiv 1$, $h_1\equiv 1$, $h_2\equiv 0$, and let $g=\mathbf{1}_{\{e\}}$.
Then
\[
f(x\sigma(y))=1=f(x)h_1(y)+g(x)h_2(y)\qquad(\forall x,y\in S),
\]
and $f,g$ are linearly independent. However,
\[
L(a)g=\mathbf{1}_{\{a^2\}}\notin \mathrm{span}\{1,\mathbf{1}_{\{e\}}\}=\mathrm{span}\{f,g\}.
\]
Hence $V^\sigma=V$ is not invariant under left translations.
\end{remark}

\section{The Generalized Sine Law with an Anti-Automorphism}\label{sec:AH44}

In this section we study the generalized sine law
\begin{equation}\label{eq:gen-anti}
f(x\sigma(y))=f(x)g(y)+\beta\,g(x)f(y)+\gamma\,f(x)f(y),
\qquad x,y\in S,
\end{equation}
where $\beta\in F^\ast$, $\gamma\in F$, and $f,g$ are linearly independent.

\begin{lemma}[A coefficient dichotomy]\label{lem:b01}
Let $S$ be a semigroup and $F$ a field with $\mathrm{char}(F)\neq2$.
Assume that $f,g:S\to F$ are linearly independent and that
\[
f(xy)=f(x)g(y)+b\,g(x)f(y)+c\,f(x)f(y)\qquad(\forall x,y\in S),
\]
where $b,c\in F$ are constants. Then $b\in\{0,1\}$.
\end{lemma}

\begin{proof}
See \cite[Lemma~5.1]{StetkaerCentrality}; cf.\ also \cite[Lemma~3.3]{StetkaerSineLaw}.
\end{proof}

\begin{lemma}\label{cor:AHLC-closure}
Assume \eqref{eq:LC-sigma}. Define
\[
\widehat h_1:=h_1\circ\sigma,\qquad \widehat h_2:=h_2\circ\sigma .
\]
Then
\[
f(xy)=f(x)\widehat h_1(y)+g(x)\widehat h_2(y)\qquad(\forall\,x,y\in S).
\]
\end{lemma}

\begin{proof}
Replace $y$ by $\sigma(y)$ in \eqref{eq:LC-sigma} and use $\sigma^2=\mathrm{id}$:
\[
f(xy)=f\bigl(x\sigma(\sigma(y))\bigr)=f(x)h_1(\sigma(y))+g(x)h_2(\sigma(y))
=f(x)\widehat h_1(y)+g(x)\widehat h_2(y).
\]
\end{proof}

\begin{lemma}[Bridge criterion for $\sigma$-conjugate span]\label{lem:bridge-span}
Let $S$ be a semigroup and let $\sigma:S\to S$ be an involutive anti-automorphism.
Assume that $f,u,h:S\to F$ satisfy
\begin{equation}\label{eq:LC-huf}
f(x\sigma(y))=f(x)\,h(y)+u(x)\,f(y)\qquad(\forall x,y\in S),
\end{equation}
where $f$ and $u$ are linearly independent. Assume moreover that
\begin{equation}\label{eq:parity-f}
f\circ\sigma=\varepsilon f\qquad\text{for some }\varepsilon\in\{\pm1\}.
\end{equation}
If there exists $d\in S$ such that
\begin{equation}\label{eq:bridge-span-assumption}
f(d)\neq 0
\qquad\text{and}\qquad
L(d)f\in \mathrm{span}\{f,u\},
\end{equation}
then
\[
u\circ\sigma\in \mathrm{span}\{f,u\}.
\]
\end{lemma}

\begin{proof}
Fix $y\in S$ and $x\in S$. Since $\sigma$ is an involutive anti-automorphism,
\[
\sigma(yx)=\sigma(x)\sigma(y)\quad\Rightarrow\quad yx=\sigma(\sigma(x)\sigma(y)).
\]
Using \eqref{eq:parity-f}, we have
\[
(L(y)f)(x)=f(yx)=f\!\left(\sigma(\sigma(x)\sigma(y))\right)
=\varepsilon\, f(\sigma(x)\sigma(y)).
\]
Now apply \eqref{eq:LC-huf} to $(x,y)=(\sigma(x),y)$:
\[
f(\sigma(x)\sigma(y))=f(\sigma(x))h(y)+u(\sigma(x))f(y).
\]
Using $f(\sigma(x))=\varepsilon f(x)$ and $u(\sigma(x))=(u\circ\sigma)(x)$, we obtain
\[
f(\sigma(x)\sigma(y))=\varepsilon f(x)h(y)+(u\circ\sigma)(x)\,f(y).
\]
Multiplying by $\varepsilon$ and using $\varepsilon^2=1$, we get
\begin{equation}\label{eq:Lyf-formula}
L(y)f = h(y)\,f+\varepsilon f(y)\,(u\circ\sigma)\qquad(\forall y\in S).
\end{equation}

Set $V:=\mathrm{span}\{f,u\}$. Taking $y=d$ in \eqref{eq:Lyf-formula} and using
\eqref{eq:bridge-span-assumption}, we obtain
\[
L(d)f = h(d)\,f+\varepsilon f(d)\,(u\circ\sigma)\in V.
\]
Since $f(d)\neq0$, we can solve for $u\circ\sigma$:
\[
u\circ\sigma
=\frac{1}{\varepsilon f(d)}\bigl(L(d)f-h(d)f\bigr)\in V.
\]
Thus $u\circ\sigma\in \mathrm{span}\{f,u\}$.
\end{proof}

\begin{corollary}[Central-element criterion]\label{cor:central-span}
Let $S$ be a semigroup and let $\sigma:S\to S$ be an involutive anti-automorphism.
Assume that $f,u,h:S\to F$ satisfy \eqref{eq:LC-huf}, where $f$ and $u$ are linearly independent, and that
\eqref{eq:parity-f} holds for some $\varepsilon\in\{\pm1\}$.
If there exists a central element $c\in Z(S)$ such that $f(c)\neq 0$, then
\[
u\circ\sigma\in \mathrm{span}\{f,u\}.
\]

\smallskip
In particular, the conclusion holds whenever $S$ is commutative and $f\not\equiv 0$.
\end{corollary}

\begin{proof}
Set $V:=\mathrm{span}\{f,u\}$ and let $y_0:=\sigma(c)$. Then $\sigma(y_0)=c$, so by \eqref{eq:LC-huf},
\[
R(c)f=R(\sigma(y_0))f=h(y_0)\,f+f(y_0)\,u\in V.
\]
Since $c\in Z(S)$, we have $L(c)f=R(c)f\in V$. Thus \eqref{eq:bridge-span-assumption} holds with $d=c$, and Lemma~\ref{lem:bridge-span} gives the desired conclusion.

For the commutative case, choose $d\in S$ with $f(d)\neq0$.
Since $\sigma$ is bijective, $d=\sigma(\sigma(d))$, and \eqref{eq:LC-huf} with $y=\sigma(d)$ gives
\[
R(d)f=R(\sigma(\sigma(d)))f\in V.
\]
Because $S$ is commutative, $L(d)f=R(d)f\in V$, so \eqref{eq:bridge-span-assumption} holds and Lemma~\ref{lem:bridge-span} applies.
\end{proof}

\begin{remark}\label{rmk:linear-independence}
Strictly speaking, the algebraic derivations in Theorem~\ref{thm:AH-LC}, Lemma~\ref{lem:bridge-span}, and Corollary~\ref{cor:central-span} do not rely on the linear independence of the functions involved; their conclusions remain valid even if the function pairs are linearly dependent. We state them with the linear independence hypothesis primarily for contextual consistency, as it seamlessly sets up the non-degenerate framework required for our main structural classification in the following important theorem.
\end{remark}

\begin{theorem}\label{thm:AH44}
Let $\sigma:S\to S$ be an involutive anti-automorphism.
Suppose $f,g:S\to F$ are linearly independent and satisfy \eqref{eq:gen-anti}.
Assume that there exists $d\in S$ such that
\begin{equation}\label{eq:AH44-bridge}
f(d)\neq0
\qquad\text{and}\qquad
L(d)f\in \mathrm{span}\{f,g\}.
\end{equation}

\begin{enumerate}
\item[(i)] If $\beta=-1$, then $\gamma=0$. Furthermore, there exists $a\in F$ such that
\[
f(xy)=f(x)g(y)+g(x)f(y)+a\,f(x)f(y),
\]
and
\[
f\circ\sigma=-f,\qquad g\circ\sigma=g+af.
\]
\item[(ii)] If $\beta\neq -1$, then necessarily $\beta=1$, and
\[
f(xy)=f(x)g(y)+g(x)f(y)+\gamma\,f(x)f(y),
\]
and
\[
f\circ\sigma=f,\qquad g\circ\sigma=g.
\]
\end{enumerate}
\end{theorem}

\begin{proof}
We distinguish the two cases $\beta=-1$ and $\beta\neq-1$.

\medskip
\noindent\textbf{(i) The case $\beta=-1$.}
Then \eqref{eq:gen-anti} becomes
\begin{equation}\label{eq:beta=-1-again}
f(x\sigma(y))=f(x)\bigl(g(y)+\gamma f(y)\bigr)-g(x)f(y)\qquad(x,y\in S).
\end{equation}
Set
\[
h:=g+\gamma f,\qquad u:=-g.
\]
Then \eqref{eq:beta=-1-again} can be written in the Levi--Civita form \eqref{eq:LC-sigma} as
\begin{equation}\label{eq:beta=-1-LCform}
f(x\sigma(y))=f(x)\,h(y)+u(x)\,f(y)\qquad(x,y\in S),
\end{equation}
i.e.\ \eqref{eq:LC-sigma} holds with $(f,g,h_1,h_2)=(f,u,h,f)$.
By Lemma~\ref{cor:AHLC-closure} we therefore have, for all $x,y\in S$,
\begin{equation}\label{eq:beta=-1-xy}
f(xy)=f(x)h(\sigma(y))+u(x)f(\sigma(y))
=f(x)\bigl(g(\sigma(y))+\gamma f(\sigma(y))\bigr)-g(x)f(\sigma(y)).
\end{equation}

\smallskip
\noindent\emph{Step 1: a two--variable law and parity of $f$.}
Apply Theorem~\ref{thm:AH-LC} to \eqref{eq:beta=-1-LCform}. Then
\[
V:={\rm span}\{f,u\}={\rm span}\{f,g\}
\]
is invariant under $R(\sigma(z))$ for every $z\in S$.
Hence, for each $z\in S$, there exist functions $P,Q:S\to F$ such that
\begin{equation}\label{eq:g-decomp-i}
g(x\sigma(z))=P(z)\,f(x)+Q(z)\,g(x)\qquad(\forall x\in S).
\end{equation}

Fix $x,y,z\in S$. Since $\sigma$ is an anti-automorphism, $\sigma(yz)=\sigma(z)\sigma(y)$, so
$f(x\sigma(yz))=f(x\sigma(z)\sigma(y))$.
Compute both sides using \eqref{eq:beta=-1-again}.

\smallskip\noindent
(1) With $yz$:
\[
f(x\sigma(yz))=f(x)\bigl(g(yz)+\gamma f(yz)\bigr)-g(x)f(yz).
\]

\smallskip\noindent
(2) With $x$ replaced by $x\sigma(z)$:
\[
f(x\sigma(z)\sigma(y))
=f(x\sigma(z))\bigl(g(y)+\gamma f(y)\bigr)-g(x\sigma(z))f(y).
\]
Insert \eqref{eq:beta=-1-again} for $f(x\sigma(z))$ and \eqref{eq:g-decomp-i} for $g(x\sigma(z))$, then compare
coefficients of the linearly independent functions $f(x)$ and $g(x)$. This yields, for all $y,z\in S$,
\begin{equation}\label{eq:casei-yzlaw}
f(yz)=f(z)g(y)+Q(z)f(y)+\gamma f(z)f(y).
\end{equation}
Putting $y=x$ gives
\begin{equation}\label{eq:casei-fxz}
f(xz)=f(z)g(x)+Q(z)f(x)+\gamma f(z)f(x)\qquad(\forall x,z\in S).
\end{equation}

On the other hand, taking $y=\sigma(z)$ in \eqref{eq:beta=-1-again} gives
\begin{equation}\label{eq:casei-fxz2}
f(xz)=f(x)g(\sigma(z)) - g(x)f(\sigma(z)) + \gamma f(x)f(\sigma(z))\qquad(\forall x,z\in S).
\end{equation}
Comparing \eqref{eq:casei-fxz} and \eqref{eq:casei-fxz2} and using linear independence of $f$ and $g$ yields,
for all $z\in S$,
\begin{equation}\label{eq:casei-parity}
f(\sigma(z))=-f(z),\qquad g(\sigma(z))=Q(z)+2\gamma f(z).
\end{equation}
In particular, $f\circ\sigma=-f$.

\medskip
\noindent\emph{Step 2: obtain $g\circ\sigma\in\mathrm{span}\{f,g\}$.}
Rewrite \eqref{eq:beta=-1-LCform} as
\[
f(x\sigma(y))=f(x)\,h(y)+u(x)\,f(y),\qquad h=g+\gamma f,\ \ u=-g.
\]
From Step~1 we already have $f\circ\sigma=-f$, i.e.\ \eqref{eq:parity-f} holds with $\varepsilon=-1$.
Since $u=-g$, the bridge hypothesis \eqref{eq:AH44-bridge} reads
\[
f(d)\neq0
\qquad\text{and}\qquad
L(d)f\in\mathrm{span}\{f,u\}.
\]
Applying Lemma~\ref{lem:bridge-span} to \eqref{eq:LC-huf}, we conclude that
\[
u\circ\sigma\in \mathrm{span}\{f,u\}.
\]
Equivalently,
\begin{equation}\label{eq:gsigma-in-span-casei}
g\circ\sigma\in \mathrm{span}\{f,g\}.
\end{equation}

\medskip
\noindent\emph{Step 3: exclude $g\circ\sigma=-g+\mu f$.}
By \eqref{eq:gsigma-in-span-casei} there exist $\lambda,\mu\in F$ such that
\[
g\circ\sigma=\lambda g+\mu f.
\]
Apply $\sigma$ again and use $f\circ\sigma=-f$:
\[
g=\lambda(g\circ\sigma)+\mu(f\circ\sigma)
=\lambda(\lambda g+\mu f)-\mu f
=\lambda^2 g+(\lambda\mu-\mu)f.
\]
By linear independence of $f$ and $g$, we obtain
\[
\lambda^2=1,\qquad (\lambda-1)\mu=0.
\]
Assume $\lambda=-1$. Then $g\circ\sigma=-g+\mu f$.
Replacing $y$ by $\sigma(y)$ in \eqref{eq:beta=-1-again} and using $f\circ\sigma=-f$ gives
\[
f(xy)=f(x)\bigl(g(\sigma(y))+\gamma f(\sigma(y))\bigr)-g(x)f(\sigma(y))
=f(x)\bigl(-g(y)+(\mu-\gamma)f(y)\bigr)+g(x)f(y).
\]
Let $\widehat g:=-g$. Then
\[
f(xy)=f(x)\widehat g(y)-\widehat g(x)f(y)+(\mu-\gamma)\,f(x)f(y),
\]
which contradicts Lemma~\ref{lem:b01}. Hence $\lambda\neq -1$, so $\lambda=1$ and
\begin{equation}\label{eq:casei-gsigma-closed}
g\circ\sigma=g+a f\quad\text{for some }a\in F.
\end{equation}

\medskip
\noindent\emph{Step 4: derive $\gamma=0$ and finish.}
From \eqref{eq:beta=-1-xy}, using $f\circ\sigma=-f$ and \eqref{eq:casei-gsigma-closed}, we obtain for all $x,y\in S$,
\begin{align*}
f(xy)
&=f(x)\bigl(g(\sigma(y))+\gamma f(\sigma(y))\bigr)-g(x)f(\sigma(y))\\
&=f(x)\bigl(g(y)+(a-\gamma)f(y)\bigr)+g(x)f(y).
\end{align*}
Thus
\begin{equation}\label{eq:casei-xy-law}
f(xy)=f(x)g(y)+g(x)f(y)+(a-\gamma)f(x)f(y)\qquad(\forall x,y\in S).
\end{equation}

Now compute $-f(xy)$ in two ways. From \eqref{eq:casei-xy-law},
\[
-f(xy)=-f(x)g(y)-g(x)f(y)-(a-\gamma)f(x)f(y).
\]
On the other hand, since $f\circ\sigma=-f$ and $\sigma$ is an anti-automorphism,
\[
-f(xy)=(f\circ\sigma)(xy)=f(\sigma(xy))=f(\sigma(y)\sigma(x)).
\]
Apply \eqref{eq:beta=-1-again} with $(x,y)=(\sigma(y),x)$ and use $f\circ\sigma=-f$ and \eqref{eq:casei-gsigma-closed}:
\begin{align*}
-f(xy)
&=f(\sigma(y)\sigma(x))
=f(\sigma(y))\bigl(g(x)+\gamma f(x)\bigr)-g(\sigma(y))f(x)\\
&=-f(y)g(x)-g(y)f(x)-(a+\gamma)f(x)f(y).
\end{align*}
Comparing the two expressions for $-f(xy)$, we obtain
\[
2\gamma\,f(x)f(y)=0\qquad(\forall x,y\in S).
\]
Since $f\not\equiv 0$, choose $x_0\in S$ with $f(x_0)\neq 0$; taking $x=y=x_0$ and using $\operatorname{char}(F)\neq 2$, we get $\gamma=0$.

With $\gamma=0$, \eqref{eq:casei-xy-law} becomes
\[
f(xy)=f(x)g(y)+g(x)f(y)+a\,f(x)f(y),
\]
and we already have $f\circ\sigma=-f$ and $g\circ\sigma=g+a f$. This proves \textup{(i)}.

\medskip
\noindent\textbf{(ii) The case $\beta\neq-1$.}
Define
\[
g_1:=g+\frac{\gamma}{1+\beta}f.
\]
Then $f$ and $g_1$ are linearly independent and \eqref{eq:gen-anti} is equivalent to
\begin{equation}\label{eq:beta-neq-1-rewrite}
f(x\sigma(y))=f(x)g_1(y)+\beta\,g_1(x)f(y)\qquad(x,y\in S).
\end{equation}

Apply Theorem~\ref{thm:AH-LC} to \eqref{eq:beta-neq-1-rewrite}. Then
$V:={\rm span}\{f,g_1\}$ is invariant under $R(\sigma(z))$ for every $z\in S$.
Hence, for each $z\in S$, there exist functions $P,Q:S\to F$ such that
\begin{equation}\label{eq:g1-decomp}
g_1(x\sigma(z))=P(z)\,f(x)+Q(z)\,g_1(x)\qquad(\forall x\in S).
\end{equation}

Using associativity as before,
$f(x\sigma(yz))=f(x\sigma(z)\sigma(y))$.
Computing both sides via \eqref{eq:beta-neq-1-rewrite} and using \eqref{eq:g1-decomp}, and then comparing coefficients of
the linearly independent functions $f(x)$ and $g_1(x)$, we obtain for all $y,z\in S$,
\begin{align}
g_1(yz) &= g_1(z)g_1(y)+\beta\,P(z)f(y), \label{eq:caseii-g1law}\\
f(yz)  &= f(z)g_1(y)+Q(z)f(y). \label{eq:caseii-fyz}
\end{align}

Putting $y=x$ in \eqref{eq:caseii-fyz} gives
\begin{equation}\label{eq:caseii-fxz}
f(xz)=f(z)\,g_1(x)+Q(z)\,f(x)\qquad(\forall x,z\in S).
\end{equation}
Taking $y=\sigma(z)$ in \eqref{eq:beta-neq-1-rewrite} gives
\begin{equation}\label{eq:caseii-fxz2}
f(xz)=f(x)\,g_1(\sigma(z))+\beta\, g_1(x)f(\sigma(z))\qquad(\forall x,z\in S).
\end{equation}
Comparing \eqref{eq:caseii-fxz} and \eqref{eq:caseii-fxz2} yields, for all $z\in S$,
\begin{equation}\label{eq:caseii-parity}
Q(z)=g_1(\sigma(z)),\qquad f(z)=\beta\, f(\sigma(z)).
\end{equation}
Applying $\sigma$ to $f(z)=\beta f(\sigma(z))$ and using $\sigma^2={\rm id}$, we obtain
\[
f(\sigma(z))=\beta f(z)\qquad(\forall z\in S).
\]
Hence
\[
f(z)=\beta f(\sigma(z))=\beta^2 f(z)\qquad(\forall z\in S).
\]
Thus $(1-\beta^2)f(z)=0$ for all $z\in S$. Since $f\not\equiv 0$, choose $z_0\in S$ with $f(z_0)\neq 0$. Then $(1-\beta^2)f(z_0)=0$, hence $\beta^2=1$.
Since $\beta\neq-1$, we conclude
\begin{equation}\label{eq:caseii-beta1}
\beta=1\quad\text{and hence}\quad f\circ\sigma=f.
\end{equation}

\medskip
\noindent\emph{Step 2: show $g_1\circ\sigma=g_1$.}
From Step~1, we already have $\beta=1$ and $f\circ\sigma=f$.
With $\beta=1$, equation \eqref{eq:beta-neq-1-rewrite} reads
\[
f(x\sigma(y))=f(x)g_1(y)+g_1(x)f(y)\qquad(\forall x,y\in S).
\]
This is exactly \eqref{eq:LC-huf} with $u=h=g_1$ and $\varepsilon=1$ in \eqref{eq:parity-f}.
Since $\mathrm{span}\{f,g_1\}=\mathrm{span}\{f,g\}$, the bridge hypothesis \eqref{eq:AH44-bridge} gives
\[
f(d)\neq0
\qquad\text{and}\qquad
L(d)f\in\mathrm{span}\{f,g_1\}.
\]
Applying Lemma~\ref{lem:bridge-span}, we get
\[
g_1\circ\sigma\in\mathrm{span}\{f,g_1\}.
\]
Thus $g_1\circ\sigma=\lambda g_1+\mu f$ for some $\lambda,\mu\in F$.
Applying $\sigma$ again and using $f\circ\sigma=f$ yields
\[
g_1=\lambda(g_1\circ\sigma)+\mu(f\circ\sigma)
=\lambda(\lambda g_1+\mu f)+\mu f
=\lambda^2 g_1+(\lambda\mu+\mu)f.
\]
Hence $\lambda^2=1$ and $(\lambda+1)\mu=0$.
If $\lambda=-1$, then $g_1\circ\sigma=-g_1+\mu f$, and replacing $y$ by $\sigma(y)$ in
\eqref{eq:beta-neq-1-rewrite} (with $\beta=1$) gives
\begin{align*}
f(xy)&=f(x)g_1(\sigma(y))+g_1(x)f(\sigma(y))\\
&=f(x)\bigl(-g_1(y)+\mu f(y)\bigr)+g_1(x)f(y)\\
&=-f(x)g_1(y)+g_1(x)f(y)+\mu f(x)f(y).
\end{align*}
Let $\widehat g_1:=-g_1$. Then
\[
f(xy)=f(x)\widehat g_1(y)-\widehat g_1(x)f(y)+\mu f(x)f(y),
\]
contradicting Lemma~\ref{lem:b01}. Therefore $\lambda\neq -1$, so $\lambda=1$ and (since $\mathrm{char}(F)\neq 2$)
we must have $\mu=0$. Consequently,
\begin{equation}\label{eq:caseii-g1sigma-closed}
g_1\circ\sigma=g_1.
\end{equation}

Finally, apply Lemma~\ref{cor:AHLC-closure} to \eqref{eq:beta-neq-1-rewrite} with $\beta=1$, and use
\eqref{eq:caseii-beta1} and \eqref{eq:caseii-g1sigma-closed}. We obtain for all $x,y\in S$,
\[
f(xy)=f(x)g_1(\sigma(y))+g_1(x)f(\sigma(y))
=f(x)g_1(y)+g_1(x)f(y).
\]
Substituting $g_1=g+\frac{\gamma}{2}f$ gives
\[
f(xy)=f(x)g(y)+g(x)f(y)+\gamma f(x)f(y)\qquad(\forall x,y\in S).
\]
Moreover, from $g_1\circ\sigma=g_1$ and $f\circ\sigma=f$ we get
\[
g\circ\sigma+\frac{\gamma}{2}f=g_1\circ\sigma=g_1=g+\frac{\gamma}{2}f,
\]
hence $g\circ\sigma=g$. This proves \textup{(ii)}.
\end{proof}

\begin{corollary}\label{cor:AH44-central}
The conclusions of Theorem~\ref{thm:AH44} hold in particular if there exists a central element
$c\in Z(S)$ such that $f(c)\neq 0$.
\end{corollary}

\begin{proof}
By \eqref{eq:gen-anti}, taking $y=\sigma(c)$ shows that
\[
R(c)f=R(\sigma(\sigma(c)))f\in\mathrm{span}\{f,g\}.
\]
Since $c\in Z(S)$, we have $L(c)f=R(c)f\in\mathrm{span}\{f,g\}$.
Hence \eqref{eq:AH44-bridge} holds with $d=c$, and Theorem~\ref{thm:AH44} applies.
\end{proof}

\begin{remark}\label{rmk:role-centrality}
In the proof of Theorem~\ref{thm:AH44}, the dichotomy $\beta\in\{\pm1\}$ and the parity relation
$f\circ\sigma=\beta f$ are obtained before the bridge hypothesis \eqref{eq:AH44-bridge} is used. The bridge
assumption enters only through Lemma~\ref{lem:bridge-span}, which is invoked to show that $g\circ\sigma$ (or
$g_1\circ\sigma$ in the case $\beta\neq -1$) belongs to the span of the basic solution pair. This
span-closure step is then used to derive the standard $xy$-addition law and the explicit
$\sigma$-transformation rule for $g$. The central-element condition of
Corollary~\ref{cor:AH44-central} is merely a convenient sufficient condition for
\eqref{eq:AH44-bridge}; Example~\ref{ex:s3-bridge} in the sequel shows that it is strictly stronger.
\end{remark}

\section{Examples and Further Illustrations}\label{sec:examples}

We first illustrate the conjugation identity on familiar groups (hence semigroups).
In each of the first three examples, $\sigma$ is an involutive anti-automorphism and $J$ denotes composition with $\sigma$,
\[
(Jh)(x)=h(\sigma(x)).
\]
We verify the operator identity of Lemma~\ref{lem:conj}:
\[
J\,R(\sigma(y))\,J=L(y)\qquad(\forall\,y\in S).
\]
For the chosen finite-dimensional space $V$ of coordinate functions, we also describe explicitly how
the left-translation operators $L(y)$ act on $V$.

We emphasize that \textbf{Examples~1--3} below are included solely to illustrate the conjugation identity $J\,R(\sigma(y))\,J=L(y)$ and the resulting left-translation action (independent of the bridge hypothesis used in Theorem~\ref{thm:AH44}), whereas \textbf{Example~4} focuses on this bridge hypothesis to show that it may hold even when no central element $c$ satisfies $f(c)\neq0$.

\subsection{Example 1: GL(n,K) with transposition and column-coordinate functions}

Let $K$ be a field and let $S=GL_n(K)$ with $\sigma(A)=A^{\top}$. In this example, we take the coefficient field to be $F=K$.
Then $\sigma$ is an involutive anti-automorphism since $(AB)^{\top}=B^{\top}A^{\top}$ and $(A^{\top})^{\top}=A$.
Fix $v\in K^n$ with $v\neq 0$ and define, for $i=1,\dots,n$,
\[
f_i(A):=e_i^{\top}A\,v=(Av)_i,\qquad A\in GL_n(K),
\]
where $(e_1,\dots,e_n)$ is the standard basis of $K^n$. Put $V:=\mathrm{span}\{f_1,\dots,f_n\}$.

\medskip\noindent
\emph{Verification of $J R(\sigma(y)) J=L(y)$.}
Here $(Jf_i)(A)=f_i(A^{\top})=e_i^{\top}A^{\top}v$.
For $y\in GL_n(K)$ and $X\in GL_n(K)$,
\begin{align*}
\big(JR(\sigma(y))J f_i\big)(X)
&= \big(R(\sigma(y))Jf_{i}\big)(\sigma(X))
 = \big(R(y^{\top})Jf_{i}\big)(X^{\top}) \\
&= (Jf_{i})(X^{\top} y^{\top})
 = e_{i}^{\top} (X^{\top} y^{\top})^{\top} v
 = e_{i}^{\top} (y X) v \\
&= f_i(yX)
= (L(y)f_i)(X).
\end{align*}
Hence $J R(\sigma(y))J=L(y)$.

\medskip\noindent
\emph{Left action on $V$.}
For $y\in GL_n(K)$ and $X\in GL_n(K)$,
\[
(L(y)f_i)(X)=f_i(yX)=e_i^{\top}yXv=\sum_{j=1}^n y_{ij}\,e_j^{\top}Xv=\sum_{j=1}^n y_{ij}\,f_j(X).
\]
In particular, $L(y)V\subseteq V$ for every $y\in GL_n(K)$.

\subsection{Example 2: The symmetric group Sn}

Let $S=S_n$ and $\sigma(\pi)=\pi^{-1}$.
Fix $j_0\in\{1,\dots,n\}$ and define, for $i=1,\dots,n$,
\[
f_i(\pi):=\mathbf{1}_{\{\pi(j_0)=i\}},\qquad \pi\in S_n,
\]
and set $V:=\mathrm{span}\{f_1,\dots,f_n\}$.

\medskip\noindent
\emph{Verification of $J R(\sigma(y)) J=L(y)$.}
Here $(Jf_i)(\pi)=f_i(\pi^{-1})=\mathbf{1}_{\{\pi(i)=j_0\}}$.
For $y\in S_n$ and $x\in S_n$,
\begin{align*}
\big(JR(\sigma(y))J f_i\big)(x)
&= \big(R(\sigma(y))Jf_{i}\big)(\sigma(x))
 = \big(R(y^{-1})Jf_{i}\big)(x^{-1}) \\
&= (Jf_i)(x^{-1}y^{-1})
 = (Jf_i)\big((yx)^{-1}\big)
 = f_i(yx)
= (L(y)f_i)(x).
\end{align*}
Hence $J R(\sigma(y))J=L(y)$.

\medskip\noindent
\emph{Left action on $V$.}
For $y\in S_n$ and $x\in S_n$,
\[
(L(y)f_i)(x)=\mathbf{1}_{\{(yx)(j_0)=i\}}
=\mathbf{1}_{\{x(j_0)=y^{-1}(i)\}}
=f_{y^{-1}(i)}(x).
\]
Thus $L(y)$ permutes the spanning set $\{f_1,\dots,f_n\}$, and in particular $L(y)V\subseteq V$.

\subsection{Example 3: SO(3,R) and column functions}

Let $S=SO(3,\mathbb{R})$ and $\sigma(g)=g^{-1}=g^{\top}$. Here the coefficient field is $F=\mathbb{R}$.
Write $g=(g_{i,\,j})_{1\le i,\,j\le 3}$ and define
\[
f_1(g):=g_{1,\,1},\qquad f_2(g):=g_{2,\,1},\qquad f_3(g):=g_{3,\,1},
\qquad V:=\mathrm{span}\{f_1,f_2,f_3\}.
\]

\medskip\noindent
\emph{Verification of $J R(\sigma(y)) J=L(y)$.}
We have $(Jf_i)(X)=f_i(X^{\top})=(X^{\top})_{i,\,1}=X_{1,\,i}$. For $y\in SO(3,\mathbb{R})$ and $x\in SO(3,\mathbb{R})$,
\begin{align*}
\big(JR(\sigma(y))J f_i\big)(x)
&= \big(R(\sigma(y))Jf_{i}\big)(\sigma(x))
= \big(R(y^{\top})Jf_{i}\big)(x^{\top}) \\
&= (Jf_{i})(x^{\top} y^{\top})
= f_i\big((x^{\top}y^{\top})^{\top}\big)
= f_i(yx)
= (L(y)f_i)(x).
\end{align*}
Hence $J R(\sigma(y))J=L(y)$.

\medskip\noindent
\emph{Left action on $V$.}
Since $(yx)_{i,\,1}=\sum_{j=1}^3 y_{i,\,j}\,x_{j,\,1}$, we get for $y,x\in SO(3,\mathbb{R})$,
\[
(L(y)f_i)(x)=f_i(yx)=(yx)_{i,\,1}=\sum_{j=1}^3 y_{i,\,j}\,f_j(x).
\]
In particular, $L(y)V\subseteq V$.

\medskip
\begin{remark}[Linear independence in Examples 1--3]\label{rmk:LI-examples}
The spanning families used above are linearly independent.

\smallskip\noindent
\textbf{Example 1.}
Assume $v\neq 0$.
If $\sum_{i=1}^n c_i f_i\equiv 0$, then for all $A\in GL_n(K)$,
\[
0=\sum_{i=1}^n c_i f_i(A)
=\Big(\sum_{i=1}^n c_i e_i^{\top}\Big)Av
=c^{\top}(Av).
\]
Since $GL_n(K)$ acts transitively on $K^n\setminus\{0\}$, we have $\{Av: A\in GL_n(K)\}=K^n\setminus\{0\}$,
hence $c=0$.

\smallskip\noindent
\textbf{Example 2.}
If $\sum_{i=1}^n c_i f_i\equiv 0$, choose $\pi$ with $\pi(j_0)=k$ to get $c_k=0$.

\smallskip\noindent
\textbf{Example 3.}
If $c_1 f_1+c_2 f_2+c_3 f_3\equiv 0$, evaluate at $I$ to get $c_1=0$.
Choose rotations sending $e_1$ to $e_2$ and $e_1$ to $e_3$ to get $c_2=c_3=0$.
\end{remark}

\subsection{Example 4: A non-central bridge instance on $S_3$}\label{ex:s3-bridge}

Let $S=S_3$ and $\sigma(\pi)=\pi^{-1}$. Define
\[
g(\pi):=\mathbf{1}_{A_3}(\pi),\qquad
f(\pi):=\mathbf{1}_{S_3\setminus A_3}(\pi),
\]
where $A_3$ denotes the alternating subgroup. Then $f$ and $g$ are linearly independent, and for all
$x,y\in S_3$ one has
\[
f(x\sigma(y))=f(xy^{-1})=f(x)g(y)+g(x)f(y).
\]
Indeed, $xy^{-1}$ is odd if and only if $x$ and $y$ have opposite parity. Thus \eqref{eq:gen-anti} holds
with $\beta=1$ and $\gamma=0$.

Choose any transposition $d\in S_3$. Then $f(d)=1$, and left multiplication by an odd permutation reverses parity, so
\[
(L(d)f)(x)=f(dx)=g(x)\qquad(\forall x\in S_3).
\]
Hence
\[
f(d)\neq0
\qquad\text{and}\qquad
L(d)f\in\mathrm{span}\{f,g\},
\]
so the bridge hypothesis \eqref{eq:AH44-bridge} holds.

On the other hand, $Z(S_3)=\{e\}$ and $f(e)=0$. Therefore no central element $c$ satisfies $f(c)\neq0$.
This shows that the bridge hypothesis of Theorem~\ref{thm:AH44} is strictly weaker than the
central-element condition in Corollary~\ref{cor:AH44-central}.

\medskip
The examples above show concretely how Lemma~\ref{lem:conj} converts the family $R(\sigma(y))$ into left translations $L(y)$
via conjugation by $J$, how this mechanism can be checked explicitly on natural finite-dimensional spaces of
coordinate functions, and why the intrinsic bridge hypothesis in Theorem~\ref{thm:AH44} is the right level of
generality for the structural argument.

\section{Concluding Remarks}\label{sec:conclusions}
The conjugation identity $J\,R(\sigma(y))\,J = L(y)$ provides an intrinsic operator interpretation of the
order-reversing family $R(\sigma(\cdot))$ that arises from an involutive anti-automorphism. Although an
equation of the form $f(x\sigma(y))=\cdots$ can also be rewritten by the substitution $y\mapsto\sigma(y)$,
the formula $J\,R(\sigma(y))\,J=L(y)$ shows directly how the anti-automorphic parameter family is converted
into a genuine left-translation action.
This left-translation viewpoint yields a natural Levi--Civita closure principle in the anti-automorphic
setting and leads, under the bridge hypothesis used in Theorem~\ref{thm:AH44}, to analogues of
Stetk\ae r's structural results for the generalized sine law, including the dichotomy
$\beta\in\{\pm1\}$, the standard $xy$-addition law, and the expected $\sigma$-transformation rules.
In the proof, the parity rule $f\circ\sigma=\beta f$ is obtained before the bridge step, whereas the bridge
hypothesis is used to control $g\circ\sigma$ and thereby to pass from the $x\sigma(y)$-law to the
standard $xy$-addition law.
Corollary~\ref{cor:AH44-central} shows that the existence of a central element $c$ with $f(c)\neq0$ is a
convenient sufficient condition for the bridge hypothesis, while Example~\ref{ex:s3-bridge} shows that this
sufficient condition is not necessary.
The examples illustrate the conjugation mechanism concretely on standard semigroups and show how the
relevant finite-dimensional spaces of coordinate functions behave under left translations.

\section*{Acknowledgment}

The author is deeply grateful to the handling editor for the careful handling of the manuscript and to the anonymous referee for reading the paper thoroughly and for providing valuable comments and suggestions, which led to a significant improvement in the presentation.

\subsection*{Funding.}
None.

\subsection*{Availability of data and materials}
Not applicable.

\subsection*{Declarations}
\subsection*{Conflict of interest}
None.

\bibliographystyle{plain}

\end{document}